\theoremstyle{plain}
\theoremstyle{plain}
\newtheorem{theorem}{Theorem} [section]
\newtheorem{lemma}[theorem]{Lemma}
\newtheorem{proposition}[theorem]{Proposition}
\theoremstyle{definition}
\theoremstyle{remark}
\numberwithin{equation}{section}
\newtheorem{remark}[theorem]{Remark}
\numberwithin{theorem}{section}
\numberwithin{equation}{section}
\numberwithin{figure}{section}
\def\mean#1{\mathchoice
         {\mathop{\kern 0.2em\vrule width 0.6em height 0.69678ex depth -0.58065ex
                 \kern -0.8em \intop}\nolimits_{\kern -0.4em#1}}%
         {\mathop{\kern 0.1em\vrule width 0.5em height 0.69678ex depth -0.60387ex
                 \kern -0.6em \intop}\nolimits_{#1}}%
         {\mathop{\kern 0.1em\vrule width 0.5em height 0.69678ex
             depth -0.60387ex
                 \kern -0.6em \intop}\nolimits_{#1}}%
         {\mathop{\kern 0.1em\vrule width 0.5em height 0.69678ex depth -0.60387ex
                 \kern -0.6em \intop}\nolimits_{#1}}}
\def\N{\mathbb N}
\def\R{\mathbb R}
\def\a{\alpha}
\def\eps{\varepsilon}
\def\H{\mathcal H}
\def\M{\mathcal M}
\def\NN{\mathcal N}
\def\F{\mathcal F}
\def\G{\mathcal G}
 \DeclareMathOperator{\dist}{dist}
\title[]{Higher integrability for minimizers\\ of the Mumford-Shah functional}
\author[G. De Philippis]{Guido De Philippis}
\address{Hausdorff Center for Mathematics,
Endenicher Allee 62, D-53115 Bonn-Germany}
\email{guido.de.philippis@hcm.uni-bonn.de}
\author[A. Figalli]{Alessio Figalli}
\address{Department of Mathematics,
The University of Texas at Austin, 1 University Station C1200,
Austin TX 78712, USA}
\email{figalli@math.utexas.edu}
\keywords{}
\begin{document}

\begin{abstract}
We prove higher integrability for the gradient of local minimizers of the Mumford-Shah energy functional,
providing a positive answer to a conjecture of De Giorgi \cite{DeG}.
\end{abstract}

\maketitle

\section{introduction}

Free discontinuity problems are a class of variational problems which involve pairs $(u,K)$ where $K$ is some closed set and $u$ is a function which minimizes
some energy outside $K.$
One of the most famous examples is given by the Mumford-Shah energy functional, which arises in image segmentation \cite{MuS}:
given a open set $\Omega\subset \R^n$, for any $K\subset \Omega$ relatively closed 
and $u \in W^{1,2}(\Omega\setminus K)$, one defines the \textit{Mumford-Shah energy} of \((u,K)\) in \(\Omega\)  to be 
$$
MS(u, K)[\Omega]:=\int_{\Omega\setminus K} |\nabla u|^2+\H^{n-1}(K\cap \Omega).
$$
We say that the pair \((u,K)\) is a \textit{local minimizer} for the Mumford Shah energy in \(\Omega\) if, for every ball \(B=B_\varrho(x) \Subset \Omega\), 
\begin{equation*}\label{eq:minimality}
MS(u,K)[B]\le MS(v, H)[B]
\end{equation*}
for all pairs \((v,H)\) such that $H\subset \Omega$ is relatively closed,
$v \in W^{1,2}(\Omega\setminus H)$, \(K\cap (\Omega\setminus B)=H\cap (\Omega\setminus B)\), and \(u=v\) almost everywhere in \((\Omega\setminus B)\setminus K\).
We denote the set of local minimizers in \(\Omega\) by \(\M(\Omega)\). 

The existence of local minimizers is by now well-known \cite{CDGL,CL,AFP,Dav}.
In \cite{DeG}, De Giorgi formulated a series of conjectures on the properties of local minimizers.
One of them states as follows \cite[Conjecture 1]{DeG}:
\smallskip 

\textbf{Conjecture (De Giorgi):} \textit{If $(u,K)$ is a (local) minimizer of the Mumford-Shah energy inside $\Omega$, then there exists $\gamma \in (1,2)$ such that
$|\nabla u|^2 \in L^{\gamma} (\Omega'\setminus K)$ for all $\Omega'\subset\subset \Omega$.}
\smallskip

A positive answer to the above conjecture was given in \cite{DLF} when $n=2$.
The proof there strongly relies on the two-dimensional assumption, since it uses the description of minimal Caccioppoli partitions.
The aim of this note is to provide a positive answer in arbitrary dimension. Since our proof avoids any compactness argument,
our constants are potentially computable\footnote{
To be precise, the constants $\bar C$ and $\bar \gamma$ can be explicitely expressed in terms of the dimension and the constants $C_0$ and $C_\eps$ appearing in Proposition \ref{properties}.
While $C_0$ is computable, 
the constant $\eps(n)$ appearing in proposition 2.1 (iv), from which $C_\eps$ depends (see \cite{MS1,Rig} and Remark \ref{rmk:eps}), is obtained in \cite{AFPpaper} using a compactness argument.
However it seems likely that the compactness step could be avoided arguing as in \cite{SS}, but since 
this would not give any new insight to the problem, we do not investigate further this point.}.
This is our main result:
\begin{theorem}\label{thm:main}
There exist 
dimensional constants $\bar C>0$ and \(\bar \gamma=\bar \gamma(n)>1\) such that, for all   \((u,K)\in \M(B_2)\),
\begin{equation}\label{eq:main}
\int_{B_{1/2}\setminus K}|\nabla u|^{2\bar\gamma} \leq \bar C.
\end{equation}
\end{theorem}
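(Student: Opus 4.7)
The plan is to establish a reverse Hölder inequality for $|\nabla u|^2$ on balls inside $B_1$ and then invoke Gehring's lemma to obtain higher integrability. Specifically, I aim to prove that there exist dimensional constants $\theta \in (0,1)$ and $C > 0$ such that, for every $x_0$ and $r$ with $B_{2r}(x_0) \subset B_2$,
$$
\mean{B_r(x_0) \setminus K} |\nabla u|^2 \,dx \leq C \left( \mean{B_{2r}(x_0) \setminus K} |\nabla u|^{2\theta} \,dx \right)^{1/\theta}.
$$
Once this inequality is in hand, Gehring's lemma upgrades $|\nabla u|^2$ from $L^1_{\mathrm{loc}}(B_2 \setminus K)$ to $L^{\bar\gamma}_{\mathrm{loc}}$ for some dimensional $\bar\gamma > 1$, yielding \eqref{eq:main}.

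The argument splits into two cases via the $\eps$-regularity statement of Proposition~\ref{properties}(iv). If $\H^{n-1}(K \cap B_{2r}(x_0)) < \eps(n)\,(2r)^{n-1}$, then $K \cap B_r(x_0) = \emptyset$ and $u$ is harmonic on $B_r(x_0)$; standard interior estimates for harmonic functions immediately give the desired reverse Hölder bound for every $\theta \in (0,1)$. Otherwise, in the \emph{bad-ball} regime $\H^{n-1}(K \cap B_{2r}(x_0)) \geq \eps\,(2r)^{n-1}$, the energy upper bound of Proposition~\ref{properties}(ii) forces $\int_{B_r(x_0)} |\nabla u|^2 \leq C_0 r^{n-1}$, so the left-hand side of the target inequality is of order $r^{-1}$.

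For the bad case I would combine a Caccioppoli-type estimate with a truncation-based SBV Poincaré-Sobolev inequality. A Caccioppoli inequality follows by testing the minimality of $(u,K)$ against a competitor equal to a constant on $B_r(x_0)$, at the price of adding $\partial B_r(x_0)$ to the jump set; this $C r^{n-1}$ surface cost is absorbed by the large density $\H^{n-1}(K \cap B_{2r}(x_0)) \geq c r^{n-1}$ available in this regime. An SBV Poincaré-Sobolev inequality, applied after truncating $u$ off a small set where the jumps concentrate, then bounds $\int |u-c|^2$ by a sub-quadratic power of $|\nabla u|$; equivalently, a Whitney-type decomposition of $B_{2r}\setminus K$ into sub-balls where $\eps$-regularity already applies should provide a matching lower bound $\int_{B_{2r}} |\nabla u|^{2\theta} \gtrsim r^{n-\theta}$, closing the estimate.

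The principal obstacle is exactly this bad-ball reverse Hölder, which in two dimensions was handled in \cite{DLF} via the structure of minimal Caccioppoli partitions. That tool being unavailable in higher dimensions, one must close the loop without any residual surface term on the right-hand side by leveraging solely the quantitative density bounds on $K$ and the variational comparison afforded by the minimality of $(u,K)$, through a purely dimension-free argument.
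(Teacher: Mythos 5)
Your proposal reduces the theorem to a reverse H\"older inequality plus Gehring's lemma, but the key step --- the reverse H\"older bound on ``bad'' balls where $\H^{n-1}(K\cap B_{2r}(x_0))\gtrsim r^{n-1}$ --- is not proved, and the mechanisms you sketch for it do not work. The ``matching lower bound'' $\int_{B_{2r}(x_0)}|\nabla u|^{2\theta}\gtrsim r^{n-\theta}$ is simply false: for a pure-jump minimizer (e.g.\ $u$ locally constant on the two sides of a smooth piece of $K$, which is the generic local picture by Proposition \ref{properties}(iv)) one has $\nabla u\equiv 0$ on such balls even though they are ``bad'' in your dichotomy, so no lower bound on the right-hand side in terms of $r$ alone can hold; any usable reverse H\"older must compare the two sides of the inequality directly, not via matching power bounds. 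Likewise, the Caccioppoli estimate obtained by testing with a constant competitor on $B_r(x_0)$ costs an additive surface term of order $r^{n-1}$; after averaging this is $r^{-1}$, which blows up as $r\to 0$ and is not of the perturbative form ($L^p$ function on the right-hand side) that Gehring-type lemmas tolerate --- it cannot be ``absorbed'' by the density of $K$, since that density gives information about $\H^{n-1}(K)$, not about $\int|\nabla u|^{2\theta}$. This is precisely the known obstruction to a naive Gehring approach for Mumford--Shah, and you acknowledge it yourself in the last paragraph: what remains open in your outline is exactly the content of the theorem, so the proposal is a reduction to an unproven (and, as formulated, unprovable by the stated means) claim rather than a proof.

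For contrast, the paper does not attempt any reverse H\"older inequality. It estimates directly the measure of the superlevel sets $A_h=\{|\nabla u|^2\ge M^{h+1}\}$: by subharmonicity of $|\nabla u|^2$ and the energy upper bound, $A_h$ lies in an $M^{-h}$-neighbourhood of $K$; then, by the porosity/$\eps$-regularity statement of Proposition \ref{properties}(iv), one inductively removes from $K$ families of ``good'' balls (of radius comparable to $M^{-h}$) on which $|\nabla u|^2< M^{h+1}$, so that $A_{h+2}$ is in fact confined to an $M^{-(h+1)}$-neighbourhood of a residual set $K_h$ whose $\H^{n-1}$-measure decays geometrically in $h$. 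This yields $|A_h|\lesssim h\,M^{-h(1+2\alpha)}$ and the $L^{2\bar\gamma}$ bound follows from the layer-cake formula. If you want to salvage your route, you would need to prove the bad-ball reverse H\"older with no residual surface term, and the paper's covering/induction suggests that the relevant cancellation happens across many scales simultaneously, not between two fixed concentric balls.
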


By a simple covering/rescaling argument, one deduces the validity of the conjecture with $\gamma=\bar\gamma$.
We also remark that our result applies with trivial modifications to the ``full'' Mumford-Shah energy
\begin{equation}\label{fullMS}
MS_g(u, K)[\Omega]:=\int_{\Omega\setminus K} |\nabla u|^2+\alpha\int_{\Omega}|u-g|^2+\beta \H^{n-1}(K\cap \Omega),
\end{equation}
where $\alpha,\beta>0$, and $g\in L^2(\Omega)\cap L^\infty(\Omega)$. \\

\textit{Acknowledgements:} AF is partially supported by NSF Grant DMS-0969962.
Both authors acknowledge the support of the ERC ADG Grant GeMeThNES.
We also thank Berardo Ruffini for a careful reading of the manuscript.

\section{Preliminaries}
In the next proposition we  collect the main known properties of local minimizers that will be used in the sequel. 

\begin{proposition}\label{properties}There exists  a dimensional constant  \(C_0\) such that for all  \((u,K)\in \M(B_2)\), the following properties hold true.
\begin{enumerate}
\item[(i)] \(u\) is harmonic in \(B_2\setminus K\).
\item[(ii)] For all \(x\in B_1\) and all \(\varrho<1\)
\[
\int_{B_\varrho(x)\setminus K} |\nabla u|^2  +\H^{n-1}(K\cap B_\varrho(x))\le C_0\varrho^{n-1}.
\]
\item[(iii)] For all \(x\in K\cap B_1\) and all \(\varrho <1\),
\[
 \H^{n-1}(K\cap B_\varrho(x))\ge \varrho^{n-1}/C_0.
\]
\item[(iv)] There is a dimensional constant \(\eps(n)>0\) such that, for every \(\varepsilon\in (0,\eps(n))\), there exists \(C_\eps>0\) for which the following statement holds true:\\
For all \(x\in K\cap B_1\) and all \(\varrho <1\) there exists a \(y\in B_{\varrho/2}(x)\cap K\), a unit vector \(\bar \nu\) and a \(C^{1,1/4}\) function \(f:\R^{n-1}\to \R\)  such that 
 \begin{equation}\label{eq:graph0}
 K\cap B_{2\varrho/C_\eps}(y) =\big [y+{\rm graph}_{\bar \nu} (f)\big]\cap B_{2\varrho/C_\eps}(y),
 \end{equation}
 where
 \begin{equation}\label{eq:graph}
 {\rm graph}_{\bar \nu} (f):=\Big\{z\in \R^n:\ z\cdot \bar \nu =f\big(z-(\bar \nu \cdot z)z\big)\Big\}.
 \end{equation}
 Moreover
 \begin{equation}\label{eq:stimef}
 f(0)=0,  \qquad   \|\nabla f\|_{\infty}+\varrho ^{1/4} \|\nabla f\|_{C^{1/4}}\le C_0 \varepsilon,
 \end{equation}
 and
 \begin{equation}\label{eq:stimeu}
 \sup_{B_{2\varrho/C_\eps}(y)}\varrho|\nabla u|^2\le C_0\eps.
 \end{equation}
\end{enumerate}
\end{proposition}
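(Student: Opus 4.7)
My plan is to verify (i)--(iv) by assembling four standard arguments of increasing depth: two direct competitor comparisons, the De Giorgi--Carriero--Leaci density lower bound, and the Ambrosio--Fusco--Pallara $\varepsilon$-regularity theorem (in its refinement by Rigot). None of the items is new; the plan is to indicate which variation or comparison to use and flag the main obstacle.

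\textbf{Item (i).} The plan is to perform an interior variation of $u$ alone. For any $\varphi\in C_c^\infty(B_2\setminus K)$ and $t\in\R$, the pair $(u+t\varphi,K)$ is admissible in any ball containing $\operatorname{supp}\varphi$, and the sharp jump set is unchanged. Expanding $MS(u+t\varphi,K)-MS(u,K)=2t\int \nabla u\cdot\nabla\varphi + t^2\int|\nabla\varphi|^2$ and minimizing in $t$ gives $\int \nabla u\cdot\nabla\varphi=0$, so $u$ is weakly (hence smoothly) harmonic in $B_2\setminus K$.

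\textbf{Item (ii).} The plan is a single competitor estimate. Fix $x\in B_1$ and $\varrho<1$, so $\overline{B_\varrho(x)}\subset B_2$. Let $H:=(K\setminus B_\varrho(x))\cup\partial B_\varrho(x)$ and set $v:=u$ outside $B_\varrho(x)$, $v\equiv 0$ inside. The pair $(v,H)$ is admissible, carries no Dirichlet energy in $B_\varrho(x)$, and adds at most $n\omega_n\varrho^{n-1}$ of surface measure. Minimality on $B_\varrho(x)$ then gives the claim with $C_0=n\omega_n$.

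\textbf{Item (iii).} Here the plan is the classical density lower bound, which is the genuinely nontrivial step among (i)--(iii). Assuming by contradiction that $\H^{n-1}(K\cap B_\varrho(x))\le\eta\varrho^{n-1}$ with $\eta\ll 1$, apply the coarea formula to the radial distance from $x$ to extract a ``good'' radius $r\in(\varrho/2,\varrho)$ at which $\H^{n-2}(K\cap\partial B_r(x))$ is correspondingly small. Since the section of $K$ on $\partial B_r(x)$ has vanishing $(n-2)$-capacity, one can produce $\tilde u\in W^{1,2}(B_2)$ agreeing with $u$ on $B_2\setminus\overline{B_r(x)}$, built from the harmonic extension of $u|_{\partial B_r(x)\setminus K}$ and a capacitary cut-off near $K\cap\partial B_r(x)$; standard capacity estimates control $\int_{B_r(x)}|\nabla\tilde u|^2$ in terms of $\H^{n-1}(K\cap B_\varrho(x))$ and $\int_{B_\varrho(x)\setminus K}|\nabla u|^2$. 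The competitor $(\tilde u,K\setminus\overline{B_r(x)})$ saves $\H^{n-1}(K\cap B_r(x))$ while paying only a fraction thereof in Dirichlet energy, violating minimality for $\eta$ sufficiently small.

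\textbf{Item (iv).} The plan is to invoke the Ambrosio--Fusco--Pallara $\varepsilon$-regularity theorem as sharpened by Rigot, which gives the graph representation \eqref{eq:graph0}--\eqref{eq:stimef} whenever the flatness and normalized Dirichlet energy at some scale are below a dimensional threshold $\eps(n)$. To produce the point $y\in B_{\varrho/2}(x)$ required by the statement, the plan is to combine two facts: first, the $\H^{n-1}$-a.e.\ ``regular'' set of $K$ (where such flatness does hold at some scale) is open and dense in $K$, by the monotonicity/flatness improvement underlying the regularity theorem; second, by item (iii) any ball $B_{\varrho/2}(x)$ with $x\in K\cap B_1$ has $\H^{n-1}(K\cap B_{\varrho/2}(x))>0$ and therefore must contain a regular point. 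Pick such a $y$, rescale so that the ball of radius $\varrho/C_\eps$ around $y$ sits inside the regular scale, and read off \eqref{eq:graph0}--\eqref{eq:stimef}. The gradient bound \eqref{eq:stimeu} then follows from harmonicity of $u$ in the (connected) half-ball-like component of $B_{2\varrho/C_\eps}(y)\setminus K$, combined with a Caccioppoli inequality and the energy bound (ii). The principal obstacle, as noted by the authors in their footnote, is that the threshold $\eps(n)$ in the $\varepsilon$-regularity theorem is obtained via a compactness argument in \cite{AFPpaper}, which makes $C_\eps$ non-explicit; bypassing this would require rerunning the proof in the spirit of \cite{SS}, and is not attempted here.
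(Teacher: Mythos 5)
Your items (i)--(iii) follow the same route as the paper, which simply records these as known facts: (i) by inner variation of $u$ alone, (ii) by the ``wall off the ball and set $v=0$'' competitor (cf.\ \cite[Lemma 7.19]{AFP}), and (iii) by citing the De Giorgi--Carriero--Leaci density lower bound. Two small caveats: in (ii) the competitor $H=(K\setminus B_\varrho(x))\cup\partial B_\varrho(x)$ does not literally satisfy $H\cap(\Omega\setminus B)=K\cap(\Omega\setminus B)$, so the comparison must be run in a slightly larger ball; and your one-step contradiction sketch for (iii) does not close as stated, because the harmonic-extension competitor's Dirichlet energy is controlled by the energy of $u$ on $B_\varrho(x)$, which by (ii) is only $O(\varrho^{n-1})$ and not $O(\eta\varrho^{n-1})$ --- the actual proof needs the decay/iteration scheme of \cite{CDGL}. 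Since you are invoking the known theorem anyway, these are presentational rather than substantive issues.

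The genuine gap is in item (iv). Your mechanism for producing $y$ --- density of the regular part of $K$ plus positivity of $\H^{n-1}(K\cap B_{\varrho/2}(x))$ --- only yields a regular point $y$ at which the graph representation holds in \emph{some} ball $B_s(y)$, with no control whatsoever on $s$ in terms of $\varrho$. The proposition demands the representation on a ball of radius $2\varrho/C_\eps$ with $C_\eps$ depending only on $\eps$ and $n$, and this scale-uniformity is the entire point: the covering construction in Lemma \ref{main lemma} removes from $K_h$ good balls of radius exactly $M^{-(h+1)}/C_\eps$ inside balls of radius $M^{-(h+1)}$, and without a uniform $C_\eps$ the geometric decay \eqref{eq:dec} collapses. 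The paper obtains this uniformity from the \emph{uniform concentration/porosity} theorem of Maddalena--Solimini \cite[Theorem 1.1]{MS1} (see also \cite{Rig}): inside every $B_\varrho(x)$ there is a sub-ball $B_r(y)$ of \emph{comparable} radius $r\ge 2\varrho/C_\eps$ on which the normalized Dirichlet energy and flatness excess \eqref{eq:small excess} are below $\eps$; only then is the $\eps$-regularity theorem applied. Relatedly, your derivation of \eqref{eq:stimeu} from interior harmonicity, Caccioppoli, and (ii) cannot give $\sup\varrho|\nabla u|^2\le C_0\eps$ (at best $\le C_0$), and interior estimates do not reach up to $K\cap B_{2\varrho/C_\eps}(y)$; one needs the smallness of the normalized energy from \eqref{eq:small excess} together with boundary estimates for the Neumann problem on the $C^{1,1/4}$ graph domain, as in \cite[Theorem 7.53]{AFP}.
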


\begin{proof}
Point (i) is easy. Point (ii) is well known and it can be proved by comparison, see \cite[Lemma 7.19]{AFP}. Point (iii) has been proved by Carriero, De Giorgi and Leaci in \cite{CDGL},
see also \cite[Theorem 7.21]{AFP}. Point (iv)  expresses the \emph{porosity} of the set where \(K\) is not a smooth graph.
This has been proved in \cite{AFPpaper,MS1, Rig}, see also \cite{MS2}. More precisely, in these papers it has been proved that for any fixed positive \(\eps\)
there exists a constant \(C_\eps\) such that, for all \(x\in K\cap B_1\) and \(\varrho <1\), there exists a point
\(y\in B_{\varrho}(x)\cap K\) and a ball  \(B_{r}(y)\subset B_{\varrho}(x)\), with \(r\ge 2\varrho/C_{\eps}\), such that
\begin{equation}
\label{eq:small excess}
\frac{1}{r^{n-1}}\int_{B_{r}(y)}|\nabla u(z)|^2\,dz+\frac{1}{r^{n+1}}\inf_{\nu\in S^{n-1}}\int _{K\cap B_{r}(y)}|(z-y)\cdot\nu|^2 \, d\H^{n-1}(z)\le \eps,
\end{equation}
see \cite[Theorem 1.1]{MS1}.
From this one applies the \(\eps\)-regularity theorem, \cite[Theorems 8.2 and 8.3]{AFP} to deduce \eqref{eq:graph0} and \eqref{eq:stimef}. Finally, \eqref{eq:stimeu} follows from \eqref{eq:stimef},  \eqref{eq:small excess}
and classical estimates for the Neumann problem, see for instance \cite[Theorem 7.53]{AFP}.
\end{proof}
The following simple geometric lemma will be useful:
\begin{lemma}[A geometric lemma]\label{lem:geom}Let \(G\) be closed set such that
\[
G\cap B_2={\rm graph}_{e_n}(f)
\]
for some Lipschitz function  \(f:\R^{n-1}\to\R\) satisfying
\begin{equation}\label{doccia}
f(0)=0\qquad\text{and}\qquad \|\nabla f\|_{\infty}\le \eps.
\end{equation}
Then, provided $\eps \leq 1/12$,
\[
\dist\big(x,(\overline B_{1+2\delta}\setminus B_{1+\delta})\cap G\big)\le \frac 3 2 \delta \qquad \forall\, \delta \in (0,1/2),\, x\in(\overline B_{1+\delta}\setminus B_1)\cap G.
\]
\end{lemma}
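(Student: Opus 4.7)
The plan is to exploit the near-flatness of $G$: since $\|\nabla f\|_\infty \le \varepsilon \le 1/12$, moving along the graph in a direction parallel to $\R^{n-1}$ is essentially the same as moving in $\R^{n-1}$ itself. Starting from $x \in (\overline B_{1+\delta}\setminus B_1) \cap G$, I would slide along $G$ in the direction of the horizontal radial projection of $x$ until the resulting point hits the sphere $\partial B_{1+\delta}$. That point will automatically lie in the outer annulus $\overline B_{1+2\delta}\setminus B_{1+\delta}$, and its distance to $x$ will be controlled by the slide parameter.

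Concretely, write $x = (x', f(x'))$ with $x' \in \R^{n-1}$. The bound $|f(x')| \le \varepsilon|x'|$, coming from $f(0)=0$ and the Lipschitz estimate, yields
\begin{equation*}
\frac{|x|}{\sqrt{1+\varepsilon^2}} \le |x'| \le |x|;
\end{equation*}
in particular $x'\neq 0$. Setting $v := x'/|x'|$ and
\begin{equation*}
y(t) := \bigl(x' + tv,\, f(x' + tv)\bigr),
\end{equation*}
a direct differentiation shows $\tfrac{d}{dt}|y(t)|^2 \ge 2(|x'|+t)(1-\varepsilon^2)$, so $|y(t)|$ is strictly increasing. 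Since $|y(0)| = |x| \in [1,1+\delta]$, there is a unique $t_1 \ge 0$ with $|y(t_1)| = 1+\delta$, and I would take $y := y(t_1)$; the bound $1+\delta \le 3/2 < 2$ keeps $y$ inside the chart $B_2$, and by construction $y \in (\overline B_{1+2\delta} \setminus B_{1+\delta}) \cap G$.

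Next, integrating the derivative inequality on $[0,t_1]$ gives
\begin{equation*}
2|x'|(1-\varepsilon^2)\,t_1 \le (1+\delta)^2 - |x|^2 \le \delta(2+\delta),
\end{equation*}
hence $t_1 \le \frac{\delta(2+\delta)\sqrt{1+\varepsilon^2}}{2(1-\varepsilon^2)}$. Combined with the Lipschitz bound $|x-y|^2 \le t_1^2(1+\varepsilon^2)$, and using $\delta \le 1/2$ with $\varepsilon \le 1/12$, a direct numerical check gives $|x-y| < \tfrac{3}{2}\delta$.

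The only delicate part is arithmetic: the constant $3/2$ must absorb both the horizontal travel $t_1$ (inflated because $|x'|$ may be slightly less than $|x|$) and the vertical correction from the graph's tilt. The hypothesis $\varepsilon \le 1/12$ is exactly what is needed to keep $\frac{(2+\delta)(1+\varepsilon^2)\sqrt{1+\varepsilon^2}}{2(1-\varepsilon^2)} < \tfrac{3}{2}$ uniformly for $\delta \in (0,1/2)$.
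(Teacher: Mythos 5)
Your proof is correct and follows essentially the same idea as the paper's: move outward along the graph above the horizontal radial direction of $x$ and use $f(0)=0$ together with $\|\nabla f\|_\infty\le\eps$ to control both the landing radius and the displacement. The only cosmetic difference is that you reach the sphere $\partial B_{1+\delta}$ via an intermediate-value/monotonicity argument, whereas the paper simply dilates the horizontal coordinate by the explicit factor $\alpha=(1+\tfrac54\delta)/|x|$ and checks the resulting point lies in the annulus; your numerical bound (about $1.28\,\delta$) indeed fits under $\tfrac32\delta$ with room to spare.
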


\begin{proof}
First notice that, by \eqref{doccia},
\[
\|f\|_{L^\infty(B_2)}+\|\nabla f\|_{L^\infty(B_2)}\le 2\eps.
\]
Given a point $x=(x',f(x')) \in (\overline B_{1+\delta}\setminus B_1)\cap G$, set $\alpha:=\frac{1+5\delta/4}{|x|}$
and let us consider the point $\bar x:=\bigl(\alpha x',f(\alpha x')\bigr)$.
Since $|x| \geq 1$ we have $0<(\a-1) |x| \leq 5\delta/4$, hence
\begin{align*}
|f(\alpha x')-\alpha f(x')| &\leq |f(\alpha x')-f(x')|+(\a-1)|f(x')|\\
&\leq (\a-1)
\Bigl(\|\nabla f\|_{\infty}|x'| + \|f\|_{\infty}\Bigr) \\
& \leq 2 (\a-1)\eps |x| =\frac52 \eps \delta,
\end{align*}
and 
$$
|\a x|=1+\frac{5}{4}\delta.
$$
Thus, provided $\eps \leq 1/12$ we get 
$$
\biggl||\bar x| - \biggl(1+\frac{5}{4}\delta\biggr)\biggr| \leq |\bar x -\alpha x| = |f(\alpha x')-\alpha f(x')| \leq \frac52 \eps \delta <\frac{\delta}{4}
$$
and 
$$
|\bar x - x| \leq |\bar x - \a x|+(\a-1)|x|=  |f(\alpha x')-\alpha f(x')|+(\a-1)\, |x|\leq  \frac52 \eps \delta
+\frac{5}{4}\delta< \frac32 \delta,
$$
which imply that $\bar x\in (\overline B_{1+2\delta}\setminus B_{1+\delta})\cap G$, concluding the proof.
\end{proof}

\begin{remark}\label{rmk:eps} In the sequel we will apply Proposition \ref{properties} only with $\eps:=\min\{\eps(n)/C_0,1/(12C_0)\}$, where
$\eps(n)$ and $C_0$ are as in Proposition \ref{properties}, and the factor $1/12$ comes from
Lemma \ref{lem:geom}. Hence, with this choice, also the constant \(C_{\eps}\) will be dimensional.
\end{remark}

\section{Proof of Theorem \ref{thm:main}}
Let \(M\gg 1\)  to be fixed, and for  \(h\in \N\) define the following set
\begin{equation}\label{eq:defAh}
A_h:=\big\{x\in B_2\setminus K \text{ such that } |\nabla u(x)|^2\ge M^{h+1}\big\}. 
\end{equation}
Notice that the sets \(A_h\) depend on \(M\). However, later \(M\) will be fixed to be a large  dimensional constant, so for notational simplicity we drop the dependence on \(M\).
We will use the notation \(\NN_\varrho(E)\) to denote the \(\varrho\)-neighbourhood of a set \(E\), i.e., the set of points at distance less than \(\varrho\) from \(E\).\\

The idea of the proof is the following: since $u$ is harmonic outside $K$ and the integral of $|\nabla u|^2$ over a ball of radius $r$ is controlled by $r^{n-1}$ (see Proposition \ref{properties}(ii)),
it follows by elliptic regularity that $A_h$ is contained in a $M^{-h}$-neighborhood of $K$ (Lemma \ref{lem:neigh}).
However, for the set $K$ we have a porosity estimate which tells us that inside every ball of radius $\varrho$ there is a ball of comparable radius where $|\nabla u|^2 \leq C/\varrho$
(see Proposition \ref{properties}(iv)). Hence, this implies that the size of $A_h$ is smaller than what one would get by just  using that
$A_h\subset \NN_{M^{-h}}(K)$. Indeed, by induction over $h$ we can show that $A_h$ is contained in the $M^{-h}$-neighborhood of
a set $K_h$ obtained from $K_{h-1}$ by removing the ``good balls'' where \eqref{eq:stimeu} hold (Lemma \ref{main lemma}).
Since the $\H^{n-1}$ measure of $K_h$ decays geometrically (see \eqref{eq:dec}), this allows us to obtain a stronger estimate on the size of $A_h$ which immediately implies
the higher integrability.
To make this argument rigorous we actually have to suitably localize our estimates, and for this we need to introduce some suitable sequences of radii (Lemma \ref{lem:radii}).
\smallskip

\begin{lemma}\label{lem:neigh} There exists a dimensional constant \(M_0\) such that for \(M\ge M_0\) and all \((u,K)\in \M(B_2)\) and \(R\le 1\)
\[
A_h\cap B_{R-2M^{-h}}\subset \NN_{M^{-h}}(K\cap \overline B_R)\quad \text{ for all \(h\in \N\).}
\]
\end{lemma}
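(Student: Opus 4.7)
The plan is to prove the contrapositive: fix $h \in \N$ and $x \in B_{R-2M^{-h}}$ with $\dist(x, K\cap \overline B_R) \ge M^{-h}$, and show that $|\nabla u(x)|^2 < M^{h+1}$ as soon as $M$ is larger than a dimensional constant $M_0$. This is exactly what it takes to rule out membership in $A_h$.

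First I would verify that the ball $B_{M^{-h}}(x)$ is contained in $B_R \cap (B_2 \setminus K)$. Since $|x| < R - 2M^{-h}$ we immediately get $B_{M^{-h}}(x) \subset B_{R-M^{-h}} \subset B_R \subset B_1$. Moreover, any $y \in K \cap B_{M^{-h}}(x)$ would satisfy $y \in K \cap \overline B_R$ and $|x-y| < M^{-h}$, contradicting the distance hypothesis; so $K \cap B_{M^{-h}}(x) = \emptyset$. By Proposition \ref{properties}(i), $u$ is thus harmonic on $B_{M^{-h}}(x)$.

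Next, since the partial derivatives of $u$ are themselves harmonic, the mean value property combined with Jensen's inequality gives the standard pointwise bound
\[
|\nabla u(x)|^2 \le \frac{C(n)}{M^{-hn}} \int_{B_{M^{-h}}(x)} |\nabla u|^2
\]
for a dimensional constant $C(n)$. Invoking Proposition \ref{properties}(ii) at the point $x \in B_1$ with radius $M^{-h} < 1$, we have
\[
\int_{B_{M^{-h}}(x)} |\nabla u|^2 \le C_0\, M^{-h(n-1)}.
\]
Multiplying these two estimates yields $|\nabla u(x)|^2 \le C(n)\,C_0\, M^{h}$. Taking $M_0$ to be any dimensional constant strictly larger than $C(n)\,C_0$, for every $M \ge M_0$ we obtain $|\nabla u(x)|^2 < M^{h+1}$, which completes the proof.

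There is really no hard step here, just a couple of bookkeeping points worth highlighting: the distance $2M^{-h}$ (rather than $M^{-h}$) in the definition of the inner ball is what guarantees $B_{M^{-h}}(x) \subset B_R$, and the exponent matching $M^{-hn}\cdot M^{-h(n-1)} = M^{-h(2n-1)}$ versus $r^n \cdot r^{n-1}$ is what forces the right-hand side to scale like $M^h$, which is exactly one power of $M$ below the threshold $M^{h+1}$ defining $A_h$—this one-power gap is what is absorbed by the choice of $M_0$.
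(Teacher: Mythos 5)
Your proof is correct and follows essentially the same route as the paper: harmonicity of $u$ off $K$ on the ball $B_{M^{-h}}(x)$, a mean-value estimate for $|\nabla u(x)|^2$ (your version via harmonicity of the partial derivatives plus Jensen is equivalent to the paper's use of subharmonicity of $|\nabla u|^2$), and the energy upper bound of Proposition \ref{properties}(ii), with the margin $2M^{-h}$ ensuring the nearest piece of $K$ lies in $\overline B_R$. The only blemish is in your closing remark, where the product of scales should read $M^{hn}\cdot M^{-h(n-1)}=M^{h}$, as in your displayed computation, rather than $M^{-hn}\cdot M^{-h(n-1)}$.
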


\begin{proof}
Let \(x\in A_h\cap B_{R-2M^{-h}}\), \(d:=\dist(x,K)\), and \(z\in K\) a point such that \(|x-z|=d\).  If \(d>M^{-h}\) then
\[
B_{M^{-h}}(x)\cap K=\emptyset\quad \text{and}\quad \text{\(u\) is harmonic on \(B_{M^{-h}}(x)\)}.
\] 
Hence, by the definition of \(A_h\), the mean value property  for subharmonic function\footnote{
Notice that, because $u$ is harmonic, $|\nabla u|^2$ is subharmonic.
Instead, when one deals with the full functional \eqref{fullMS} (or if one wants to consider more general energy functionals than $\int |\nabla u|^2$), the mean value estimate has to be replaced by the one-sided Harnack inequality for subsolutions to uniformly elliptic equations, which in the case of minimizers of \eqref{fullMS} reads as:\[ |\nabla u(x)|^2\le C(n)\Big( \mean{B_{M^{-h}}(x)}|\nabla u|^2+\alpha^2 M^{-2h}\|g\|^2_\infty\Big). \]
}, and Proposition \ref{properties}(ii), we get
\[
M^{h+1}\le |\nabla u(x)|^2\le  \mean{B_{M^{-h}}(x)}|\nabla u|^2\le \frac{C_0}{|B_1|}M^{h},
\]
which is impossible if \(M\) is large enough. Moreover, since \(x\in B_{R-2M^{-h}}\) and \(d\le M^{-h}\) we see that \(z\in B_R\), proving the claim.
\end{proof}

\begin{lemma}[Good radii]\label{lem:radii} There are dimensional positive constants \(M_1\) and \(C_1\) such that for  \(M\ge M_1\) we can find three sequences of radii
\(\{R_h\}_{h\in \N}\), \(\{S_h\}_{h\in \N}\) and \(\{T_h\}_{h\in \N}\) for which the following properties hold true for  every \((u,K)\in \M(B_2)\).
\begin{enumerate}
\item[(i)]\(1\ge R_h\ge S_h\ge T_h \ge R_{h+1}\),
\medskip
\item[(ii)]\(R_h-R_{h+1}\le {M^{-\frac{(h+1)}{2}}}\) and \(S_h-T_{h}=T_h-R_{h+1}= 4M^{-(h+1)}\),
\medskip
\item[(iii)] \(\H^{n-1}\big(K\cap(\overline B_{S_h}\setminus\overline B_{R_{h+1}})\big)\le C_1{M^{-\frac{(h+1)}{2}}}\),
\medskip
\item[(iv)]\(R_\infty=S_\infty=T_\infty\ge 1/2\).
\end{enumerate}
\end{lemma}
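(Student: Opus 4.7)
The plan is to construct the three sequences by induction on $h$ using a two-scale pigeonhole argument on radii. At step $h$ I reserve an annular layer of thickness $\rho_h := M^{-(h+1)/2}$ just below $R_h$, and within this layer I locate a thin sublayer of thickness $8 M^{-(h+1)}$ across which $K$ has small $\mathcal{H}^{n-1}$-mass; this sublayer will house $R_{h+1}$, $T_h$ and $S_h$. I initialise with $R_0 := 1$.

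For the inductive step, suppose $R_h \in [1/2, 1]$ is given. Partition the interval $[R_h - \rho_h, R_h]$ into $N_h := \lfloor \rho_h / (8 M^{-(h+1)}) \rfloor = \lfloor M^{(h+1)/2}/8 \rfloor$ consecutive closed subintervals of length $8 M^{-(h+1)}$, starting from $R_h$ and marching inward (with a leftover of length $< 8 M^{-(h+1)}$ at the inner end). These correspond to $N_h$ pairwise disjoint annuli inside $B_{R_h}$. By Proposition~\ref{properties}(ii) applied at the origin,
\[
\mathcal{H}^{n-1}(K \cap B_{R_h}) \leq C_0 R_h^{n-1} \leq C_0,
\]
so by pigeonhole at least one of the $N_h$ annuli carries $\mathcal{H}^{n-1}$-mass at most $C_0/N_h$. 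I pick such an annulus, declare its inner radius to be $R_{h+1}$ and its outer radius to be $S_h := R_{h+1} + 8 M^{-(h+1)}$, and set $T_h := R_{h+1} + 4 M^{-(h+1)}$. For $M \geq M_1$ with $M_1$ a sufficiently large dimensional constant (it is enough that $M^{1/2}/8 \geq 2$ at the worst step $h = 0$), one has $N_h \geq M^{(h+1)/2}/16$, whence $C_0/N_h \leq 16 C_0 M^{-(h+1)/2}$; this yields (iii) with $C_1 := 16 C_0$.

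Properties (i) and (ii) are then immediate from the construction: $R_{h+1} < T_h < S_h \leq R_h$ by the definitions, and $R_h - R_{h+1} \leq N_h \cdot 8 M^{-(h+1)} \leq \rho_h$ by the choice of $N_h$. For (iv), telescoping gives
\[
1 - R_\infty = \sum_{h=0}^{\infty}(R_h - R_{h+1}) \leq \sum_{h=0}^{\infty} M^{-(h+1)/2} = \frac{M^{-1/2}}{1 - M^{-1/2}},
\]
which, after enlarging $M_1$ once more, is at most $1/2$, so $R_\infty \geq 1/2$. The equality $R_\infty = S_\infty = T_\infty$ follows from $S_h - R_{h+1} = 8 M^{-(h+1)} \to 0$.

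I do not foresee any real obstacle beyond constant-chasing: the only input from the paper is Proposition~\ref{properties}(ii). The one thing requiring care is choosing $M_1$ large enough so that simultaneously (a) $N_h \geq 2$ at every $h$ so that the pigeonhole is meaningful, (b) $C_1 = 16 C_0$ stays dimensional, and (c) the geometric series $\sum M^{-(h+1)/2}$ sums to at most $1/2$ so that (iv) holds.
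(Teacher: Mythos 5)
Your proof is correct and follows essentially the same route as the paper: partition the layer of thickness $M^{-(h+1)/2}$ below $R_h$ into disjoint annuli of thickness $8M^{-(h+1)}$, pigeonhole using the energy/length bound of Proposition \ref{properties}(ii) to select one with small $\H^{n-1}$-mass of $K$, and place $R_{h+1}$, $T_h$, $S_h$ there, with (iv) following from the summability of $M^{-(h+1)/2}$. The only differences are cosmetic (indexing from $R_0$ instead of $R_1$, taking floors rather than assuming $\sqrt M/8\in\N$, and the explicit constant $C_1=16C_0$).
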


\begin{proof} We set \(R_1=1\). Given \(R_h\) we show how to construct \(S_h\), \(T_h\) and \(R_{h+1}\). With no loss of generality
(up to slightly enlarge $M$) we can assume that \(\sqrt M/8\) is a natural number. Then we write
\[
\overline B_{R_h}\setminus \overline B_{R_h-M^{-\frac{(h+1)}{2}}}=\bigcup_{i=1}^{\sqrt{M^{(h+1)}}/8} \overline B_{R_h-(i-1)8M^{-(h-1)}}\setminus \overline B_{R_h-i8M^{-(h+1)}}.
\]
Being the annuli in the right hand side disjoints, there is at least an index \(\bar i\) such that
\begin{equation*}\label{treno}
\begin{split}
\H^{n-1}\Big(K\cap \overline B_{R_h-(\bar i-1)8M^{-(h-1)}}\setminus \overline B_{R_h-\bar i8M^{-(h+1)}}\Big)&\le 8M^{-\frac {(h+1)}{2}}\H^{n-1}\big(K\cap \overline B_{R_h}\setminus \overline B_{R_h-M^{-\frac{(h+1)}{2}}}\big)\\
&\le8M^{-\frac {(h+1)}{2}}\H^{n-1}(K\cap \overline B_1) \le C_1 M^{-\frac{(h+1)}{2}},
\end{split}
\end{equation*}
where in the last inequality  we have taken into account Proposition \ref{properties}(ii). Then we set
 \[
 S_h:=R_h-(\bar i-1)8M^{-(h+1)}, \quad R_{h+1}:=R_h-\bar i8M^{-(h+1)},\quad T_h:=(S_h+R_{h+1})/2.
 \] 
 Then properties (i), (ii) and (iii) trivially hold, and while (iv) follow from (ii) by choosing \(M\) large enough.
\end{proof}

\begin{lemma}\label{main lemma}
Let $C_0,\eps,C_\eps,C_1,M_1$ be as in Proposition \ref{properties} and Lemma \ref{lem:radii}, with $\eps$ as in Remark \ref{rmk:eps}.
There exist  dimensional constants \(C_2, M_2, \eta>0\), with $M_2\geq M_1$, such that,  for every \(M\ge M_2\), \((u,K)\in \M(B_2)\), and \(h\in \N\), we can find \(h\) families of disjoint  balls 
\[
\F_j=\Big\{B_{M^{-j}/C_\eps}(y_i),\quad y_i\in K,\quad i=1,\dots, N_j \Big\},\qquad j=1,\dots,h,
\]
such that
\begin{enumerate}
\item[(i)] If \(B^1, B^2\in \bigcup_{j=1}^{h}\F_j\) are distinct balls, then \(\NN_{4M^{-(h+1)}}(B^1)\cap \NN_{4M^{-(h+1)}}(B^2)=\emptyset\).
\medskip
\item[(ii)] If \(B_{M^{-j}/C_\eps}(y_i)\in \F_j\) then there is a unit vector $\nu$ and a \(C^1\) function \(f:\R^{n-1}\to \R\), with 
\[
f(0)=0\quad\text{and}\quad\|\nabla f\|_{\infty}\le \eps,
\]
such that
\[
K\cap B_{2M^{-j}/C_\eps}(y_i)=\bigl[y_i+{\rm graph }_\nu (f)\bigr]\cap B_{2M^{-j}/C_\eps}(y_i)\quad\text{and}\quad
\sup_{B_{2M^{-j}/C_\eps}(y_i)}|\nabla u|^2< M^{j+1}. 
\]

\medskip
\item[(iii)] Let  \(\{R_h\}_{h\in \N}\), \(\{S_h\}_{h\in \N}\) and \(\{T_h\}_{h\in \N}\) be the sequences of radii constructed in Lemma \ref{lem:radii} and define

\begin{equation}\label{eq:defkh}
K_{h}:=\big(K\cap \overline B_{S_h}\big)\setminus \biggl(\bigcup_{j=1}^h\bigcup_{B\in \F_j} B\biggr),
\end{equation}
and 
\begin{equation}\label{eq:defktilde}
\widetilde K_{h}:=\big(K\cap \overline B_{T_{h}}\big)\setminus  \biggl(\bigcup_{j=1}^h\bigcup_{B\in \F_j} \NN_{2M^{-(h+1)}}(B)\biggr)\subset K_{h}.
\end{equation}
Then there exists a finite set of points \(\mathcal C_h:=\{x_i\}_{i\in I_{h}}\subset \widetilde K_h\) such that
\begin{equation}\label{disjoint}
|x_j-x_k|\ge 3 M^{-(h+1)}\qquad \forall\, j,  k\in I_{h}, \, j\neq k,
\end{equation}
\begin{equation}\label{disjoint2}
\NN_{M^{-(h+1)}}( K_h\cap \overline B_{R_{h+1}})\subset \bigcup_{x_i \in \mathcal C_h} B_{8M^{-(h+1)}}(x_i). 
\end{equation}
Moreover
\begin{equation}\label{eq:dec}
\H^{n-1}(K_{h+1})\le (1-\eta) \H^{n-1}(K_{h})+C_1M^{-\frac {h+1} 2},
\end{equation}
\begin{equation}\label{eq:dec2}
\big|\NN_{M^{-{(h+1)}}}(K_h\cap \overline B_{R_{h+1}})\big|\le C_2  M^{-(h+1)}\H^{n-1}(K_{h}).
\end{equation}
\medskip
\item[(iv)] Let \(A_h\) be as in \eqref{eq:defAh}. Then
\begin{equation}\label{eq:cont}
A_{h+2}\cap B_{R_{h+2}}\subset \NN_{M^{-(h+1)}}(K_h\cap B_{R_{h+1}}).
\end{equation}
\end{enumerate}
\end{lemma}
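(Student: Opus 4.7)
The plan is to induct on $h$. The case $h=0$ is direct (no families to construct, (iv) follows from Lemma \ref{lem:neigh} with $R=R_1$ and Lemma \ref{lem:radii}(ii), and $\mathcal C_0$ is built as a maximal $3M^{-1}$-separated subset of $\widetilde K_0$ whose $8M^{-1}$-balls cover $\NN_{M^{-1}}(K_0\cap\overline B_{R_1})$). For the inductive step, suppose (i)--(iv) hold up to stage $h$. For each $x_i\in\mathcal C_h$, apply Proposition \ref{properties}(iv) with $\varrho=M^{-(h+1)}$ to produce $y_i\in B_{M^{-(h+1)}/2}(x_i)\cap K$ and a ball $B_{M^{-(h+1)}/C_\eps}(y_i)$ inside which $K$ is a $C^1$-graph with $\|\nabla f\|_\infty\le C_0\eps\le\eps$ (by Remark \ref{rmk:eps}) and $|\nabla u|^2\le C_0\eps\,M^{h+1}<M^{h+2}$ (for $M$ large): this is the family $\F_{h+1}$, which satisfies (ii) by construction. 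Property (i) follows from the $3M^{-(h+1)}$-separation of the $x_i$ in \eqref{disjoint} (hence $|y_i-y_j|\ge 2M^{-(h+1)}$) together with $\dist(x_i,B)\ge 2M^{-(h+1)}$ for every $B\in\F_1\cup\cdots\cup\F_h$ (by the definition of $\widetilde K_h$); for $M$ large this absorbs both the radii $M^{-(h+1)}/C_\eps$ and the $4M^{-(h+2)}$-neighbourhoods.

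For (iv) at stage $h+1$, Lemma \ref{lem:neigh} with $R=R_{h+2}$ (using $R_{h+3}\le R_{h+2}-2M^{-(h+3)}$ from Lemma \ref{lem:radii}(ii)) provides, for every $x\in A_{h+3}\cap B_{R_{h+3}}$, some $z\in K\cap\overline B_{R_{h+2}}$ with $|x-z|<M^{-(h+3)}$. If $z\in K_{h+1}$ we are done; otherwise $z\in B$ for some $B\in\F_j$, $j\le h+1$, where property (ii) gives $|\nabla u|^2<M^{j+1}\le M^{h+2}$, while $|\nabla u(x)|^2\ge M^{h+4}$, so $x\notin B$ and consequently $z$ lies within $M^{-(h+3)}$ of $\partial B$. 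Setting $\rho:=|z-y_i|\in[r-M^{-(h+3)},r)$ with $r$ the radius of $B$, I would rescale $B_\rho(y_i)$ to the unit ball and apply Lemma \ref{lem:geom} to the corresponding graph to displace $z$ outward along $K$ by at most $O(M^{-(h+3)})$, obtaining $z'\in K$ just outside $B$. Property (i) forbids $z'$ from entering any other old ball, and $R_{h+2}=S_{h+1}-8M^{-(h+2)}$ keeps $z'\in\overline B_{S_{h+1}}$, so $z'\in K_{h+1}$ and $|x-z'|<M^{-(h+2)}$. This is the main difficulty of the proof: Lemma \ref{lem:geom} is precisely engineered to enable such an escape, and works only because the annular thickness $M^{-(h+3)}$ is much smaller than the ball radius $M^{-j}/C_\eps$ for every $j\le h+1$, which holds once $M\gg C_\eps$.

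For (iii), choose $\mathcal C_{h+1}$ as a maximal $3M^{-(h+2)}$-separated subset of $\widetilde K_{h+1}$ whose $8M^{-(h+2)}$-balls cover $\NN_{M^{-(h+2)}}(K_{h+1}\cap\overline B_{R_{h+2}})$; maximality immediately yields \eqref{disjoint} and \eqref{disjoint2}. For \eqref{eq:dec}, each $B\in\F_{h+1}$ is disjoint from the previous balls and satisfies $B\subset\overline B_{S_h}$ (since $x_i\in\overline B_{T_h}=\overline B_{S_h-4M^{-(h+1)}}$), so $K\cap B\subset K_h$, and Proposition \ref{properties}(iii) applied at $y_i\in K$ gives $\H^{n-1}(K_h\cap B)\ge c\,M^{-(h+1)(n-1)}$. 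Combining with
\[
|\F_{h+1}|=|\mathcal C_h|\ge c'\bigl(\H^{n-1}(K_h)-C_1M^{-(h+1)/2}\bigr)M^{(h+1)(n-1)},
\]
which follows from \eqref{disjoint2} at stage $h$, the $\H^{n-1}$-upper bound of Proposition \ref{properties}(ii), and Lemma \ref{lem:radii}(iii), and using $K_{h+1}\subset K_h\setminus\bigcup_{B\in\F_{h+1}}B$, yields \eqref{eq:dec} with $\eta=cc'$ dimensional. For \eqref{eq:dec2}, the disjoint balls $B_{M^{-(h+1)}}(x_i)$ each contain $\H^{n-1}$-mass $\gtrsim M^{-(h+1)(n-1)}$ of $K_h$ (by the same disjointness and containment argument together with Proposition \ref{properties}(iii)), forcing $|\mathcal C_h|\lesssim M^{(h+1)(n-1)}\H^{n-1}(K_h)$, and \eqref{disjoint2} then bounds the volume of the neighbourhood by $|\mathcal C_h|\cdot|B_{8M^{-(h+1)}}|\le C_2M^{-(h+1)}\H^{n-1}(K_h)$.
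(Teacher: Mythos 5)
Your construction of $\F_{h+1}$ via Proposition \ref{properties}(iv) applied in the balls $B_{M^{-(h+1)}}(x_i)$, your verification of (i)--(ii), and your proofs of \eqref{eq:dec} and \eqref{eq:dec2} follow the paper's scheme. The genuine gap is in \eqref{disjoint2}: you assert that maximality of the $3M^{-(h+2)}$-separated set $\mathcal C_{h+1}\subset\widetilde K_{h+1}$ ``immediately'' yields the covering of $\NN_{M^{-(h+2)}}(K_{h+1}\cap\overline B_{R_{h+2}})$. Maximality only controls distances to points of $\widetilde K_{h+1}$, but $K_{h+1}$ is strictly larger than $\widetilde K_{h+1}$: it contains points of $K$ lying in the shells $\NN_{2M^{-(h+2)}}(B)\setminus B$ around the balls $B\in\bigcup_{j\le h+1}\F_j$, and for such a point $\bar x$ there is no a priori nearby point of $\widetilde K_{h+1}$ (locally, $K$ could in principle be entirely contained in these tubular neighborhoods). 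This is exactly the case the paper's Step 4 is devoted to, and it is there that Lemma \ref{lem:geom} is indispensable: since $K$ is an $\eps$-Lipschitz graph through the center of the doubled ball, one slides $\bar x$ outward to a point $y\in K\cap\NN_{4M^{-(h+2)}}(B)\setminus\NN_{2M^{-(h+2)}}(B)$ with $|y-\bar x|\le 3M^{-(h+2)}$; property (i) guarantees $y$ avoids the neighborhoods of all other balls, and the choice $T_{h+1}=R_{h+2}+4M^{-(h+2)}$ keeps $y\in\overline B_{T_{h+1}}$, so $y\in\widetilde K_{h+1}$ and only then does maximality give a net point within $7M^{-(h+2)}$. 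Without this step, \eqref{disjoint2} at stage $h+1$ is unproven, and with it fall \eqref{eq:dec2} and the lower bound on $N_{h+1}$ that feeds \eqref{eq:dec} at the next stage.

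Relatedly, you deploy Lemma \ref{lem:geom} in the proof of (iv), where it is not needed: property (ii) gives $\sup|\nabla u|^2<M^{j+1}\le M^{h+2}$ on the \emph{doubled} ball $B_{2M^{-j}/C_\eps}(y_i)$, and since $|x-z|\le M^{-(h+3)}\ll M^{-j}/C_\eps$, the assumption $z\in B$ forces $x$ into the doubled ball, contradicting $x\in A_{h+3}$ outright; hence $z\in K_{h+1}$ and no escape along the graph is required (this is the paper's Step 7). Your escape argument could probably be pushed through, but even there you do not check that the displaced point $z'$ lies in $\overline B_{R_{h+2}}$, which the right-hand side of \eqref{eq:cont} requires (it does, since $|z|\le|x|+M^{-(h+3)}\le R_{h+2}-7M^{-(h+3)}$). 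The net effect is that the key geometric lemma has been misplaced: the step you single out as the main difficulty is handled trivially in the paper, while the step you dismiss as immediate is precisely where the lemma does its work.
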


\begin{proof}
We proceed by induction. For \(h=1\) we set \(\F_1=\emptyset\), so that \(K_1=K\cap \overline B_{S_1}\) and \(\widetilde K_1=K\cap \overline B_{T_1}\). We also choose \(\mathcal C_1\) to be a
maximal family of points at distance \(3M^{-2}\) from each other. Clearly (i), (ii), and \eqref{eq:dec} are true. The other properties can be easily obtained as in the steps below and the proof is left to the reader.

\bigskip
Assuming we have constructed \(h\) families of balls \(\{\F_j\}_{j=1}^h\) as in the statement of the Lemma, we show how to construct the family \(\F_{h+1}\).
For this, let \(\mathcal C_h=\{x_i\}_{i\in I_{h}} \subset \widetilde K_h\) be a family of points satisfying \eqref{disjoint}, and let us consider the family of disjoint  balls
$$
\G_{h+1}:=\big\{B_{M^{-(h+1)}}(x_i)\big\}_{i\in I_{h}}.
$$

\medskip
\noindent
\textit{Step 1.} We show that
\[
B_{M^{-(h+1)}}(x_i)\cap   K_h= B_{M^{-(h+1)}}(x_i)\cap  K\qquad \forall \,x_i\in \mathcal C_h.
\]
Indeed, assume by contradiction there is a point \(x\in B_{M^{-(h+1)}}(x_i)\cap (K\setminus K_h)\). First of all notice that, by Lemma \ref{lem:radii}(ii), since  \(x_i\in B_{T_h}\)
we get that \(x\in B_{S_h}\). Hence,  by the definition of \(K_h\), there is a ball \(\widetilde B\in \F_j\), \(j\le h\), such that \(x\in \widetilde B\). But then
\[
\dist(x_i,\widetilde B) \le |x-x_i|\le M^{-(h+1)}<2M^{-(h+1)},
\]
a contradiction to the fact that $x_i \in \widetilde K_h$.

\medskip
\noindent
\textit{Step 2.} We claim that there exists a positive   dimensional constant \(\eta_0\) such that, if   \(N_h\) is the cardinality of \( I_h\), then
\[
 N_h  M^{-(h+1)(n-1)}\ge \eta_0 \H^{n-1}(K_h\cap \overline B_{R_{h+1}})
\]
Indeed, by \eqref{disjoint2} and Proposition \ref{properties}(ii),
\[
\begin{split}
 \H^{n-1}(K_h\cap \overline B_{R_{h+1}})&=  \H^{n-1}\Big(K_h\cap \overline B_{R_{h+1}}\cap \bigcup_{x_i \in \mathcal C_h} B_{8M^{-(h+1)}}(x_i)\Big)\\
 &\le C_0N_h\big(8M^{-(h+1)}\big)^{n-1}= \frac {1}{\eta_0} N_h M^{-(h+1)(n-1)},
\end{split}
\]
where $\eta_0:=1/(C_08^{n-1})$.

\medskip
\noindent
\textit{Step 3.} By Proposition \ref{properties}(iv) and Remark \ref{rmk:eps}, for every ball \(B_{M^{-(h+1)}}(x_i)\in \G_{h+1}\) there exists a ball
\begin{equation}
\label{eq:inclusion}
B_{M^{-(h+1)}/C_\eps}(y_i)\subset B_{M^{-(h+1)}}(x_i)
\end{equation}
such that
\[
\sup_{B_{2M^{-(h+1)}/C_\eps}(y_i)}|\nabla u|^2\le \eps M^{h+1}< M^{h+2}.
\]
and
\[
K\cap B_{2M^{-(h+1)}/C_\eps}(y_i)=\bigl[y_i+{\rm graph }_\nu (f)\bigr]\cap B_{2M^{-(h+1)}/C_\eps}(y_i),
\]
for some unit vector \(\nu\) and some \(C^1\) function \(f\) such that
\[
f(0)=0\quad\text{and}\quad\|\nabla f\|_{\infty}\le \eps.
\]
 We define 
\[
\F_{h+1}:=\big\{ B_{M^{-(h+1)}/C_\eps}(y_i)\big\}_{i\in I_h}.
\]
In this way property (ii) in the statement of the lemma is satisfied.
Moreover, since the balls $\{B_{3M^{-(h+1)}/2}(x_i)\}_{i \in I_h}$ are disjoint (because $|x_j-x_k|\geq 3M^{-(h+1)}$) and do not intersect
$$
\bigcup_{j=1}^h\bigcup_{B\in \F_j} \NN_{\frac{M^{-(h+1)}}{2}}(B)
$$
it follows from \eqref{eq:inclusion} that also property (i) is satisfied provided we choose $M$ sufficiently large. \\
We  define \(K_{h+1}\) and \(\widetilde K_{h+1}\) as in the statement of the lemma and we take \(\mathcal C_{h+1}=\{x_i\}_{i\in I_{h+1}}\) a maximal sets of points in \(\widetilde K_{h+1}\) satisfying
\[
|x_j-x_k|\ge 3M^{-(h+2)}\qquad \forall\,j\ne k.
\]

\medskip
\noindent
\textit{Step  4.} The set of  points \(\mathcal C_{h+1}\) defined in the previous step  satisfies  by construction \eqref{disjoint}.
We now prove it also satisfies \eqref{disjoint2}. For this, let \(x\in \NN_{M^{-(h+2)}}(K_{h+1}\cap \overline B_{R_{h+2}})\) and let \(\bar x \in K_{h+1}\cap B_{R_{h+2}}\) be such that
\[
|x-\bar x|=\dist\big(x,K_{h+1}\cap \overline B_{R_{h+2}}\big)\le M^{{-(h+2)}}.
\]
In case \(\bar x\in \widetilde K_{h+1}\), by maximality there exists a point \(x_i\in \mathcal C_{h+1}\) such that \(|\bar x-x_i|\le 3M^{-(h+2)}\), hence \(x\in B_{5M^{-(h+2)}}(x_i)\) and we are done.
So, let us assume that
\[
\bar x\in \left( K_{h+1}\cap \overline B_{R_{h+2}}\right)\setminus \widetilde K_{h+1}.
\]
In this case, by the definition of $K_{h+1}$ and $\widetilde K_{h+1}$, there exists a ball \(\widetilde B\in \bigcup_{j=1}^{h+1} \F_j\) such that
\[
x\in K\cap \mathcal \NN_{2M^{-(h+2)}}(\widetilde B)\setminus \widetilde B.
\]
Thanks to property (ii) we can apply (a scaled version of) Lemma \ref{lem:geom} to find a point 
\[
y\in K\cap \NN_{4M^{-(h+2)}}(\widetilde B)\setminus \NN_{2M^{-(h+2)}}(\widetilde B)
 \]
such that
\[
|\bar x-y|\le 3M^{-(h+2)}.
\]
Since \(\bar x\in \overline B_{R_{h+2}}\) and $T_{h+1}=R_{h+2}+ 4M^{-(h+2)}$
\[
y\in \left(K\cap B_{T_{h+1}} \cap \NN_{4M^{-(h+2)}}(\widetilde B)\right)\setminus \NN_{2M^{-(h+2)}}(\widetilde B) \subset \widetilde K_{h+1},
\]
where the last inclusion follows by property (i) and the definition of \(\widetilde K_h\).
Again by maximality, there exists a point \(x_i \in \mathcal C_{h+1}\) such that $|y-x_i|\leq 3M^{-(h+2)}$, hence
\[
|x_i-x|\le |x_i-y|+|y-\bar x|+|\bar x -x|\le 7M^{-(h+2)},
\] 
which completes the proof of \eqref{disjoint2}.

\medskip
\noindent
\textit{Step 5.} We prove \eqref{eq:dec}. Notice that, being the balls in \(\F_{h+1}\) disjoint, 
thanks to Step 1,
the density estimates in  Proposition \ref{properties}(iii),  Step 2, and choosing \(\eta:=\eta_0/C_0^n\) we get
\[
\begin{split}
\H^{n-1}(K_{h+1})&\le \H^{n-1}\Big(K_h\setminus \bigcup_{i\in I_h} B_{M^{-(h+1)}/C_\eps}(y_i)\Big)\\
&=\H^{n-1}(K_h )-\sum_{i\in I_h} \H^{n-1}\bigl(K_h\cap B_{M^{-(h+1)}/C_\eps}(y_i)\bigr)\\
&\le \H^{n-1}(K_h)-\frac {N_h}{C_0^n}M^{-(h+1)(n-1)}\\
&\le \H^{n-1}(K_h)-\frac{\eta_0}{C^n_0} \H^{n-1}( K_h\cap \overline B_{R_{h+1}})\\
&= (1-\eta)\H^{n-1}(K_h)+\eta\bigl[\H^{n-1}(K_h) - \H^{n-1}( K_h\cap \overline B_{R_{h+1}})\bigr]\\
&\le (1-\eta)\H^{n-1}(K_h)+\H^{n-1}\big(K\cap \overline B_{S_h}\setminus \overline B_{R_{h+1}} \big)\\
&\le (1-\eta)\H^{n-1}(K_h)+C_1M^{-\frac{(h+1)}{2}},
\end{split}
\]
where in the last step we used Lemma \ref{lem:radii}(iii).

\medskip
\noindent
\textit{Step 6.} We prove \eqref{eq:dec2}. By \eqref{disjoint2},
\[
\NN_{M^{-(h+2)}}(K_{h+1}\cap \overline B_{R_{h+2}})\subset \bigcup_{x_i\in C_{h+1}}B_{8M^{-(h+2)}}(x_i),
\]
hence, denoting with \(N_{h+1}\) the cardinality of \(I_{h+1}\),
\begin{equation}\label{freddo}
\big|\NN_{M^{-(h+2)}}(K_{h+1}\cap \overline B_{R_{h+2}})\big|\le 8^{n}M^{-(h+2)} N_{h+1}M^{-(h+2)(n-1)}.
\end{equation}
Also, by Step 1 with \(h\) replaced by \(h+1\),
\[
B_{M^{-(h+2)}}(x_i)\cap   K_{h+1}= B_{M^{-(h+2)}}(x_i)\cap  K \qquad \forall\, x_i \in\mathcal C_{h+1},
\]
hence by the density estimates in Proposition \ref{properties}(iii),
\[
M^{-(h+2)(n-1)}\le C_0 \H^{n-1}\big(K_{h+1}\cap B_{M^{-(h+2)}}(x_i)\big).
\]
The above equation and \eqref{freddo}, together with the disjointness of the balls \(\big\{B_{M^{-(h+2)}}(x_i)\big\}_{i\in I_{h+1}}\) imply
\[
\big|\NN_{M^{-(h+2)}}(K_{h+1}\cap \overline B_{R_{h+2}})\big|\le C_08^{n}M^{-(h+2)} \sum_{i\in I_{h+1}}\H^{n-1}\big(K_{h+1}\cap B_{M^{-(h+2)}}(x_i)\big)\le C_2 M^{-(h+2)}\H^{n-1}(K_{h+1}).
\]
\medskip
\noindent
\textit{Step 7.} We are left to show point (iv). Let \(x\in A_{h+3}\cap B_{R_{h+3}}\). By Lemma \ref{lem:radii} \(R_{h+2}-R_{h+3}\ge 8 M^{-(h+3)}\), hence, by Lemma \ref{lem:neigh},
\[
A_{h+3}\cap B_{R_{h+3}} \subset \NN_{M^{-(h+3)}}(K\cap \overline B_{R_{h+2}})\subset \NN_{M^{-(h+2)}}(K\cap \overline B_{R_{h+2}}).
\]
Let \(\bar x\in K\cap \overline B_{R_{h+2}} \) a point realizing the distance and assume by contradiction that \(\bar x\in K\setminus K_{h+1}\).
By the definition of \(K_{h+1}\) and since \(R_{h+2}\le S_{h+1}\), this means that there is a ball \(\widetilde B\in \bigcup_{j=1}^{h+1}\F_j\) such that \(\bar x\in \widetilde B \).
Since $|\bar x-x| \leq M^{-(h+2)}$ and the radius of \(\widetilde B\) is at least  \(M^{-(h+1)}/C_\eps\),
we can choose \(M\) large enough so that \(x\in 2 \widetilde B\). But then, by property (ii) of the statement,
\[
|\nabla u (x)|^2< M^{h+2},
\]
a contradiction to the fact that \(x\in A_{h+3}\).
\end{proof}

We are now in the position to prove Theorem \ref{thm:main}:
\begin{proof}[Proof of Theorem \ref{thm:main}] Iterating \eqref{eq:dec} we obtain
\begin{equation}\label{dec2}
\H^{n-1}(K_h)\le C_1\sum_{i=0}^{h}(1-\eta)^{h-i}M^{-\frac{i}{2}}\le C_1 h\max \big\{(1-\eta)^h,M^{-h/2}\big\}.
\end{equation}
We now fix \(M:=M_2\) where \(M_2\) is the constant appearing in Lemma \ref{main lemma}, and choose \(\alpha \in (0,1/4)\) such that \((1-\eta)\le M^{-2\alpha}\). In this way,
since \(2\alpha<1/2\) it follows from \eqref{dec2} that
$$
\H^{n-1}(K_h)\le C_1 h M^{-2\alpha h}.
$$
Hence, by \eqref{eq:cont}, \eqref{eq:dec2},   and the above equation, we obtain
$$
|A_{h+2}\cap B_{R_{h+2}}|\le  \big|\NN_{M^{-(h+1)}}(K_h\cap \overline B_{R_{h+1}})\big|\le C_1C_2 h M^{-h(1+2\alpha)}\qquad \forall\,h\ge 1,
$$
so Lemma \ref{lem:radii}(iv) and the definition of \(A_h\) (see \eqref{eq:defAh}) finally give
\begin{equation}
\label{eq:decay}
\bigl|\{x\in B_{1/2}\setminus K: |\nabla u|^2(x)\ge M^{h}\}\bigr|\le C_1C_2 M^{2+4\alpha} h M^{ -h(1+2\alpha)}\qquad \forall\,h\ge 3.
\end{equation}
Since
\[
\begin{split}
\int_{B_{1/2}\setminus K}|\nabla u |^{2\gamma}&= \gamma\int_0^\infty t^{\gamma-1}|(B_{1/2}\setminus K)\cap \{|\nabla u|^2 \ge t\}| \,dt\\
&\le M^{\gamma} \sum_{h=0}^\infty M^{h\gamma}\bigl | (B_{1/2}\setminus K)\cap \{|\nabla u|^2\ge M^{h}\}\bigr|,
\end{split}
\]
\eqref{eq:decay} implies the validity of
\eqref{eq:main} with, for instance, \(\bar\gamma=1+\alpha\).
\end{proof}

\newpage

\end{document}